\documentclass{amsart}

\title{An improved algorithm for checking the Collatz Conjecture for all $n < 2^N$}
\author{Vigleik Angeltveit}
\address{Mathematical Sciences Institute \\
Australian National University \\
Canberra, ACT 0200 \\
Australia}

\usepackage{amsxtra}
\usepackage{amsfonts}
\usepackage[latin1]{inputenc}
\usepackage{graphicx}
\usepackage{subcaption}
\usepackage{amsmath,amssymb,latexsym,amsthm,mathrsfs}
\usepackage[all]{xy}
\usepackage{pstricks}
\usepackage{verbatim}

\usepackage[ruled,vlined]{algorithm2e}

\newtheorem{theorem}{Theorem}[section]
\newtheorem{thm}[theorem]{Theorem}
\newtheorem{lemma}[theorem]{Lemma}

\newtheorem{cor}[theorem]{Corollary}

\theoremstyle{definition}
\newtheorem{defn}[theorem]{Definition}

\makeatletter
\let\c@equation\c@theorem
\makeatother
\numberwithin{equation}{section}

         \newcommand{\bZ}{\mathbb{Z}}

\DeclareMathOperator{\dip}{dip}

\numberwithin{figure}{section}

\begin{document}

\begin{abstract}
We describe a new algorithm for verifying the Collatz conjecture for all $n < 2^N$ for some fixed $N$. The algorithm takes less than twice as long to verify convergence for all $n < 2^{N+1}$ as it does to verify convergence for all $n < 2^N$.

We also discuss verification of the analogue of the Collatz conjecture for negative numbers.
\end{abstract}

\maketitle

\section{Introduction}
Let $n \geq 1$ be a natural number, and define the Collatz function $C(n)$ and the modified Collatz function $T(n)$ by
\[
 C(n) = \begin{cases} n/2    \quad & \textnormal{if $n$ is even} \\
                      3n + 1 \quad & \textnormal{if $n$ is odd}
        \end{cases}
        \qquad
 T(n) = \begin{cases} n/2        \quad & \textnormal{if $n$ is even} \\
                      (3n + 1)/2 \quad & \textnormal{if $n$ is odd}
        \end{cases}
\]
The Collatz conjecture states that for any $n$, by applying $C$ enough times we eventually reach the orbit $1 \mapsto 4 \mapsto 2 \mapsto 1$. Equivalently, by applying $T$ enough times we eventually reach the orbit $1 \mapsto 2 \mapsto 1$.

A proof of the Collatz conjecture appears to be out of reach with current techniques, but considerable computer resources have been devoted to verifying the Collatz conjecture for all $n < K$ for increasingly large $K$. The most recent progress on the subject is described in \cite{Ba25}, where Barina verifies convergence for all $n < 2^{71}$, and we will refer to this paper repeatedly for the current state of the art.

The goal of the current paper is to describe a more efficient algorithm for verifying the Collatz conjecture up to some $K$. Unlike previous algorithms, the algorithm presented in this paper becomes more efficient for higher $K$ in the sense that the fraction of integers in $[1, 2^{N+1}]$ that must be checked is smaller than the fraction of integers in $[1, 2^N]$ that must be checked. Indeed, as $N \to \infty$ the fraction of integers in $[1, 2^N]$ that must be checked approaches $0$. The number of integers that must be checked does of course still go to infinity. For the rest of the paper we fix $N$, and explain how to check that any $n < 2^N$ converges.

The key idea is to consider all $n$ with the same $k$ least significant bits (when written in binary) at the same time, and then do a recursive search where we add one bit at a time.

As a further optimisation, we precompute some bitvectors of length $2^B$ and use one of them to add the $A$ most significant bits in one go by looking $B$ steps ahead, for some chosen constants $A$ and $B$ with $B \geq A$. This makes the tail end of the recursive search faster at the same time as it excludes more cases.

At a high level this can be described as in Algorithm \ref{a:highlevel}.

\begin{algorithm}[H] \label{a:highlevel}
\begin{enumerate}
\item By recursively adding more bits, starting with the least significant bit, exclude all $n_0 < 2^{N-A}$ for which $T^{k}(n_0) < n_0$ for some $0 < k \leq N - A$, and at least some $n_0 < 2^{N-A}$ for which there exists an $n_1 < n_0$ with $T^k(n_0) = T^{\ell}(n_1)$. Simultaneously compute $T^{N-A}(n_0)$ for the remaining possible $n_0$. 

\item Use precomputed bitvectors to exclude all $n = n_0 + a 2^{N-A} < 2^N$ for which $T^{k}(n) < n$ for some $N - A < k \leq N - A + B$, all $n \equiv 2, 4, 5, 8 \mod 9$, and at least some $n < 2^N$ for which there exists an $n_1 < n$ with $T^k(n) = T^{\ell}(n_1)$. Simultaneously compute $T^{N-A}(n)$ for the remaining values of $n$.

\item For the remaining values of $n < 2^N$, compute $T^k(n)$ for $k > T^{N-A}(n)$ until this value falls below $n$.
\end{enumerate}
\caption{a high level overview of the algorithm to verify convergence for all $n < 2^N$.}
\end{algorithm}

The exclusion of $n \equiv 2, 4, 5, 8 \mod 9$ in Step (2) comes from the fact that in those cases $n = T^\ell(n_1)$ for some $n_1 < n$. To also detect $n$ for which $T^k(n) = T^{\ell}(n_1)$ for some $n_1 < n$ we add two more sieves to Steps (1) and (2). In theory these sieves could also be used in Step (3), but we have been unable to do so quickly enough to be worth it. See Section \ref{s:prelim} below for a description of all the sieves we use.

With this algorithm it should be possible to verify convergence for $n < 2^{77}$ with similar computer resources as what Barina used to verify convergence for $n < 2^{71}$. A more modest goal might be to verify convergence for all $n < 2^{75}$. We can then use the techniques in \cite{He23} to show that a nontrivial cycle, if one exists, must have length at least 2\,302\,268\,119\,908. That would be a 6.5-fold improvement on the result in their paper.

In our computer implementation we chose $A = 6$ for the CPU version and $A = 10$ for the GPU version, and $B = 24$ for both versions. The choice is based on benchmarking different values.

In \cite[Section 3.2]{Ba25} Barina states that using a $3^k$ sieve (with $k = 2$) is a time-critical operation. We implement the $3^2$ sieve, which we will refer to as the mod $9$ Preimage Sieve, by taking a bitwise \texttt{and} of one of $9$ precomputed bitvectors of length $2^A$ and a part of one of the bitvectors of length $2^B$ mentioned above. This reduces the number of mod $9$ calculations by a large factor and makes the mod $9$ Preimage Sieve very cheap to implement.

As we will explain below, our algorithm is equivalent to using a precomputed sieve of size $2^{N - A + B}$, where the precomputed bitvectors have size $2^B$, as well as the Mod 9 Preimage Sieve and two other sieves. By this we mean that the fraction of integers we actually need to consider is the same as the fraction one would get by using such a large precomputed sieve plus the other sieves.

If we set $N = 72$, $A = 10$, and $B = 24$, this means that our algorithm is equivalent to using a sieve of size $2^{86}$ plus the three other sieves. Moreover, Step (2) computes $T^{N-A}(n)$, which saves time when checking those numbers that survive the initial sieves. This compares to using a sieve of size $2^{34}$ from \cite[Table 4]{Ba25}. That is close to the limit of what is practical to store in memory; one advantage of our algorithm is that we avoid storing the large sieve.

One disadvantage of our algorithm is that it does not proceed linearly. It can, for example, be broken up into cases based on the last $k$ bits for a convenient value of $k$, but it cannot easily be adapted to check an interval $[K, L]$ of integers. This makes a direct comparison with Barina's work \cite{Ba25} difficult, as he breaks the computation up into intervals of length $2^{40}$.

\subsection{A computer implementation}
We have implemented our algorithm both for running on a CPU (in Rust) using native 128-bit integer types, and for running on a GPU using either CUDA or OpenCL using multiple 32-bit integers to represent a 128-bit integer value. We have made all the code available at \texttt{https://github.com/vigleik0/collatz}. The Rust program is about 340 lines of code, and does not contain any error checking. The CUDA program is about 700 lines, and the OpenCL program is about 900 lines. Both compute a hash, or checksum, and compares the result from running on the GPU to the result from running on the CPU for a random sample of inputs. Except for some differences in the boilerplate code required by CUDA and OpenGL the code is essentially identical, and both versions produce the same checksum.

To get the most out of a GPU, we basically have two choices. We could write a multi-threaded program with all the bells and whistles required to keep multiple ``GPU kernels'' in flight at the same time, or we could split the problem into multiple parts and run multiple copies of the program at the same time. We chose the latter option. Even with 8 copies of the program running concurrently, CPU usage is not very high, but we obtain close to 100\% utilization of the GPU.

Because the CUDA version is a little bit faster than the OpenCL version we will report on the running time for the CUDA version only.

Running on a GPU is, unsurprisingly, much faster than runnig on a CPU. On our computer we saw a difference of approximately a factor of 60. The running time appears to be significantly faster than other computer programs described in the literature. We estimate that it would take approximately 500 years of CPU time to verify the Collatz conjecture for all $n < 2^{72}$. Running on a GPU it would take somewhere in the range of 1-8 years, depending on the GPU. Here the 8 year estimate is based on benchmarking with a Nvidia RTX 3060 GPU and the 1 year estimate is based on the theoretical difference between a Nvidia RTX 3060 and a Nvidia RTX 5090. The calculation can of course be split up into cases to be processed in parallel.

We used the CPU version of our program to check all integers up to $2^{40}$ for convergence, using the appropriate sieves. It explicitly checked $756\,583\,624$ integers, or about 0.081\% of all natural numbers up to $2^{40}$. The program took 21 seconds, which translates to about $2^{35.6}$ integers per second, on a single AMD Zen 3 core running at 4.6GHz. We then ran the same program to check all integers up to $2^{50}$. This time it explicitly checked $434\,413\,363\,553$ integers, which is about 574 times as many. This time the calculation took 176 minutes, or about $2^{36.6}$ integers per second. In other words, it took about $2^9$ times as long to check $2^{10}$ times higher.

This is significantly faster than the 217s per $2^{40}$ numbers in \cite[Table 7]{Ba25}, especially when extrapolated to checking up to $2^N$ for some $N \geq 72$.

We then used the GPU version to check all integers up to $2^{50}$. This took 166 seconds on a Nvidia RTX 3060 with 3584 cores. This is about 60 times faster than the CPU version, or about $2^{42.6}$ integers per second. We then checked up to $2^{55}$. This run took 3636 seconds, or about $2^{43.2}$ integers per second. Finally we checked up to $2^{60}$. This took 24.4 hours, or about $2^{43.6}$ integers per second. This illustrates how the algorithm becomes better when checking a larger range. The GPU we ran the program on is obviously not state of the art: A Nvidia RTX 5090 has about 8 times the 32-bit integer arithmetic performance, and we assume it will speed up the calculation by a similar factor.

In \cite[Table 9]{Ba25}, Barina states that checking $2^{40}$ integers took 4.2s on a Nvidia RTX 2080 Ti. That's about $2^{37.9}$ integers per second. In other words, the new algorithm is about 50 times faster when checking up to $2^{60}$, and probably close to 100 times faster when checking up to $2^{72}$. (The Nvidia RTX 3060 and 2080 Ti have similar theoretical performance, so this should be a fair comparison.)

We estimate that with our algorithm we would have to explicitly check less than 0.02\% of all natural number up to $2^{72}$.

To check for correctness, we had our program output all starting values $n$ in the appropriate range for which $T^k(n)$ reached a large value. We compared this to the table of path records in \cite{Ba_table}, and verified that our program detects all the path records with a starting value below $2^{60}$.

To make a potential distributed computation possible, we have split the problem into 55, 1\,168, or 278\,699 separate cases of (at least conjecturally) similar running time.

\section{Some sieves} \label{s:prelim}
In this section we give some background material and discuss four different sieves. It is convenient to use the function $T$ instead of the function $C$, because we get to divide by $2$ exactly once each time.

\begin{lemma}[\cite{Si99}] \label{l:Tkfromlastbits}
The sequence of even and odd applications of $T$ in the $k$-fold composite $T^k(n)$ only depends on the last $k$ bits of the binary representation of $n$.
\end{lemma}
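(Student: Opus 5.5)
We argue by induction on $k$, proving the slightly stronger statement that if $n \equiv m \pmod{2^k}$ then $T^i(n)$ and $T^i(m)$ have the same parity for all $0 \leq i < k$, and moreover $T^k(n) \equiv T^k(m) \pmod{3^{j}}$-independent data aside, $T^k(n) - T^k(m) = 3^{j}(n-m)/2^k$, where $j$ is the number of odd steps among the first $k$ applications. The parity sequence of $T^k(n)$ is exactly the sequence of parities of $n, T(n), \dots, T^{k-1}(n)$, so the lemma follows from the first part of this statement.

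The case $k = 0$ is vacuous, and $T^0(n) - T^0(m) = n - m$. For the inductive step, suppose $n \equiv m \pmod{2^{k+1}}$. In particular $n \equiv m \pmod{2^k}$, so by induction the first $k$ parities agree, say with $j$ odd steps, and
\[
T^k(n) - T^k(m) = \frac{3^j (n-m)}{2^k}.
\]
Since $2^{k+1}$ divides $n - m$ and $3^j$ is odd, the right hand side is even. Hence $T^k(n)$ and $T^k(m)$ have the same parity, which is the $(k+1)$st parity.

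To finish the induction we update the difference formula. If both values are even, then $T^{k+1}(n) - T^{k+1}(m) = (T^k(n) - T^k(m))/2$. If both are odd, then $T^{k+1}(n) - T^{k+1}(m) = 3(T^k(n) - T^k(m))/2$. In either case we get $3^{j'}(n-m)/2^{k+1}$, with $j' = j$ or $j+1$ respectively.

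The only real subtlety is to carry the exact difference formula rather than just congruences, so that the divisibility by $2$ needed at each step is visible. Everything else is a one-line computation. An equivalent alternative is to observe that $T$ maps the residue class $r + 2^{k+1}\bZ$ affinely onto a class modulo $2^k$ whose parity is determined by $r$. Iterating this observation gives the same result.
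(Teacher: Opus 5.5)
Your argument is correct and complete. The paper does not prove this lemma; it quotes it, like Lemma~\ref{l:formulaforTk}, from \cite{Si99}, so there is no in-paper argument to compare against, and your induction serves as a self-contained replacement.

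Its main bonus is that the identity $T^k(n)-T^k(m)=3^j(n-m)/2^k$ you carry along is exactly Lemma~\ref{l:formulaforTk} (put $m=n_0$, $n=n_0+a2^k$). You therefore establish both cited facts in a single induction and show that they are two halves of one statement.

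The exact formula is not needed for the parity claim alone, though. The congruences $T^i(n)\equiv T^i(m) \pmod{2^{k-i}}$ for $0\le i\le k$ propagate just as easily, since halving and $x\mapsto 3x+1$ both respect them. That is really your closing ``alternative'', so calling the exact difference the ``only real subtlety'' overstates things.

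A few cosmetic points:
\begin{itemize}
\item The phrase ``$T^k(n)\equiv T^k(m) \pmod{3^j}$-independent data aside'' in your first paragraph is garbled and should be deleted.
\item The oddness of $3^j$ plays no role in showing that $3^j(n-m)/2^k$ is even. It matters only for the converse, which gives the bijection $\bZ/2^M\to\bZ/2^M$ the paper mentions.
\item In the closing aside, for odd $r$ the map $T$ sends $r+2^{k+1}\bZ$ \emph{into}, not onto, the class $(3r+1)/2+2^k\bZ$, since its image is $(3r+1)/2+3\cdot 2^k\bZ$. ``Into'' is all the iteration needs.
\end{itemize}
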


For example, because $3 = (11)_2$ and $11 = (1011)_2$ share the same last $3$ bits we can compute $T^3(3)$ and $T^3(11)$ using the same sequence of even and odd applications of $T$: odd, odd, even. For $3$ we get $3 \mapsto 5 \mapsto 8 \mapsto 4$ and for $11$ we get $11 \mapsto 17 \mapsto 26 \mapsto 13$.

\begin{defn}
We let $f_k(n)$ be the number of odd applications of $T$ to get to $T^k(n)$. Equivalently, $f_k(n)$ is the number of odd integers in $\{ n, T(n), \ldots, T^{k-1}(n) \}$.
\end{defn}

For example, $f_3(11) = 2$ since $2$ of the $3$ integers $\{ 11, T(11), T^2(11) \} = \{ 11, 17, 26 \}$ are odd. By Lemma \ref{l:Tkfromlastbits}, $f_k(n)$ depends only on the last $k$ bits of $n$.

\begin{lemma}[\cite{Si99}] \label{l:formulaforTk}
Suppose $n = n_0 + a 2^k$ for some $a \geq 0$ and that $T^k(n_0)$ has $f = f_k(n_0)$ odd applications. Then
\[
 T^k(n) = T^k(n_0) + 3^f a.
\]
\end{lemma}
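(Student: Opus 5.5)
We prove the statement by induction on $k$, proving at the same time that the parity sequence of $n$ and $n_0$ agrees in the first $k$ steps. This is Lemma \ref{l:Tkfromlastbits}; we reprove it as part of the induction so that the two facts stay together. The stronger claim is: for every $0 \leq j \leq k$,
\[
 T^j(n) = T^j(n_0) + 3^{f_j(n_0)} 2^{k-j} a .
\]
At $j = k$ this is exactly the stated formula.

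For $j = 0$ the claim is the definition $n = n_0 + a2^k$, since $f_0(n_0) = 0$.

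For the inductive step, suppose the claim holds for some $j < k$. Write $m_0 = T^j(n_0)$, $g = f_j(n_0)$ and $c = 3^g 2^{k-j} a$, so that $T^j(n) = m_0 + c$. Since $k - j \geq 1$, $c$ is even, so $T^j(n)$ and $m_0$ have the same parity and $T$ applies the same branch to both. This gives the parity agreement at step $j$ and hence $f_{j+1}(n) = f_{j+1}(n_0)$.

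We then compute in the two cases.
\begin{itemize}
\item If $m_0$ is even, then $T^{j+1}(n) = (m_0 + c)/2 = T^{j+1}(n_0) + 3^g 2^{k-j-1} a$, and $f_{j+1}(n_0) = g$.
\item If $m_0$ is odd, then $T^{j+1}(n) = (3m_0 + 3c + 1)/2 = T^{j+1}(n_0) + 3^{g+1} 2^{k-j-1} a$, and $f_{j+1}(n_0) = g + 1$.
\end{itemize}
In both cases the claim holds for $j+1$, which completes the induction. Setting $j = k$ and $f = f_k(n_0)$ gives $T^k(n) = T^k(n_0) + 3^f a$.

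The only delicate point is the choice of induction hypothesis. The naive statement $T^k(n) = T^k(n_0) + 3^f a$ does not induct directly on $k$. One must track the residual power $2^{k-j}$ in the intermediate steps, because its evenness for $j < k$ is what forces the parities of $T^j(n)$ and $T^j(n_0)$ to agree. Once that hypothesis is in place, the rest is the two-line computation above.
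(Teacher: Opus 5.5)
Your proof is correct. The invariant $T^j(n)=T^j(n_0)+3^{f_j(n_0)}2^{k-j}a$ holds at $j=0$, the offset is even for $j<k$ so both numbers take the same branch of $T$, and both case computations are right. The paper gives no proof of this lemma; it quotes it from the literature and then uses it to derive the one-bit Corollary. So there is no competing route to compare against, and yours is the standard argument.

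One correction to your final paragraph: the plain statement does induct directly on $k$, provided $a$ is universally quantified. Your intermediate claim at stage $j$ is literally the lemma at level $j$ with $a$ replaced by $2^{k-j}a$. To pass from level $k$ to level $k+1$, write $n_0+a2^{k+1}=n_0+(2a)2^k$. The level-$k$ statement then gives $T^k(n)=T^k(n_0)+2\cdot 3^{f_k(n_0)}a$, which has the same parity as $T^k(n_0)$, and one more application of $T$ finishes. So no strengthening of the hypothesis is needed, only the freedom to let $a$ vary.
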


For example, we can take $n_0 = 3$ and $n_1 = 3 + 1 \cdot 2^3 = 11$. Since $T^3(n_0)$ has $f = 2$ odd applications, $T^3(11) = T^3(3) + 3^2$. This has been used by various authors to implement a lookup-table for computing $T^k(n)$ in a single step by looking at the last $k$ bits of $n$, and we used this in our CPU implementation to do $k = 16$ steps at a time in Step (3) of Algorithm \ref{a:highlevel}. For our GPU implementation we found that it was better to avoid constantly looking up values in a table, and we instead used \cite[Algorithm 1]{Ba21}.

We immediately get the following:

\begin{cor}
Suppose $T^k(n_0)$ has $f = f_k(n_0)$ odd applications. If $T^k(n_0)$ is even then
\[
 T^{k+1}(n_0 + 2^k) = \frac{3(T^k(n_0) + 3^f) + 1}{2},
\]
and if $T^k(n_0)$ is odd then
\[
 T^{k+1}(n_0 + 2^k) = \frac{T^k(n_0) + 3^f}{2}.
\]
\end{cor}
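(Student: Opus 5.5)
The plan is to apply Lemma \ref{l:formulaforTk} with $a = 1$ and then take one more step of $T$, splitting on the parity of $T^k(n_0)$. Lemma \ref{l:formulaforTk} gives
\[
 T^k(n_0 + 2^k) = T^k(n_0) + 3^f ,
\]
where $f = f_k(n_0)$. Since $T^{k+1}(n_0 + 2^k) = T\bigl(T^k(n_0) + 3^f\bigr)$, everything reduces to determining the parity of $m = T^k(n_0) + 3^f$ and then applying the definition of $T$.

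The key observation is that $3^f$ is odd for every $f \geq 0$. Hence $m$ and $T^k(n_0)$ have opposite parities. If $T^k(n_0)$ is even, then $m$ is odd, so
\[
 T(m) = \frac{3m+1}{2} = \frac{3(T^k(n_0) + 3^f) + 1}{2} .
\]
If $T^k(n_0)$ is odd, then $m$ is even, so
\[
 T(m) = \frac{m}{2} = \frac{T^k(n_0) + 3^f}{2} .
\]
These are exactly the two formulas in the statement.

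There is no real obstacle; the only point requiring care is the parity flip coming from the oddness of $3^f$. I would note, though not needed for the proof, that this flip is why the $(k+1)$-st step for $n_0 + 2^k$ is the opposite kind of step from the one for $n_0$. That is consistent with Lemma \ref{l:Tkfromlastbits}, since the two numbers differ exactly in bit $k$.
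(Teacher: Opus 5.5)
Your proof is correct and is essentially the argument the paper leaves implicit when it says the corollary follows immediately from Lemma \ref{l:formulaforTk}. You set $a = 1$ to get $T^k(n_0 + 2^k) = T^k(n_0) + 3^f$, observe that adding the odd number $3^f$ flips the parity, and apply one more step of $T$.
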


This lets us compute both $T^{k+1}(n_0)$ and $T^{k+1}(n_0 + 2^k)$ from $T^k(n_0)$, so we can compute $T^M(n)$ for all $n < 2^M$ inductively by adding one bit at a time starting from the least significant bit. We keep track of the number $f$ of odd applications of $T$, and use a precomputed table of powers of $3$.

It is perhaps interesting to note that for any natural number $t < 2^M$, if we interpret the $k$'th bit in the binary representation of $t$ being $0$ as the $k$'th application of $T$ being even and the $k$'th bit being $1$ as the $k$'th application of $T$ being odd then there is exactly one natural number $n < 2^M$ for which repeated application of $T$ gives that pattern of even and odd. This defines a somewhat mysterious bijection $\bZ/2^M \to \bZ/2^M $. By letting $M$ go to infinity this gives a bijection from the $2$-adic integers to itself.

We now describe the different sieves we use.

\subsection{The Descent Sieve}
This sieve relies on the following result:

\begin{thm} \label{t:excludefromlastbits}
Suppose $n_1 = n_0 + a 2^k$ for some $a \geq 0$ and that $T^k(n_0) < n_0$. Then $T^k(n_1) < n_1$ as well.
\end{thm}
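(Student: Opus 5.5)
We prove the statement with Lemma \ref{l:formulaforTk}, which says $T^k(n_1) = T^k(n_0) + 3^f a$ with $f = f_k(n_0) \le k$. Subtracting $n_1 = n_0 + 2^k a$ gives
\[
 T^k(n_1) - n_1 = \bigl(T^k(n_0) - n_0\bigr) + a\,(3^f - 2^k).
\]
The first term is negative by hypothesis and $a \ge 0$. It therefore suffices to show $3^f < 2^k$, or at least $3^f \le 2^k$, which gives $T^k(n_1) - n_1 < 0$.

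The key step is this inequality. We derive it from the hypothesis $T^k(n_0) < n_0$ by tracking how $T$ scales along the trajectory. For $m \ge 1$, an even step gives $T(m) = m/2$, and an odd step gives $T(m) = (3m+1)/2 > 3m/2$. Iterating, we get
\[
 T^k(n_0) \ge \frac{3^f}{2^k}\, n_0,
\]
and the inequality is strict whenever $f \ge 1$. If $3^f \ge 2^k$, this would give $T^k(n_0) \ge n_0$, a contradiction. Hence $3^f < 2^k$, and equality is impossible anyway since $3^f$ is odd and $2^k$ is even for $k \ge 1$.

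The hypothesis $T^k(n_0) < n_0$ forces $k \ge 1$ and $n_0 \ge 1$. The case $n_0 = 0$ cannot occur, since then $T^k(0) = 0$ and the hypothesis would read $0 < 0$. The positivity used in the scaling bound holds along the trajectory, because $T$ maps positive integers to positive integers.

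The only real content, and so the main point to get right, is the inequality $3^f < 2^k$. It must be deduced from the descent of $n_0$ itself rather than assumed. Once it is established, the rest is the one-line rearrangement of Lemma \ref{l:formulaforTk} above. The same argument also shows that the difference $n_1 - T^k(n_1)$ grows with $a$, which matches the intuition that $T^k$ acts on this residue class as an affine contraction with slope $3^f/2^k < 1$.
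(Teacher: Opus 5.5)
Your proof is correct, but it argues differently from the paper. The paper does not use Lemma \ref{l:formulaforTk} here; it compares the two trajectories step by step. Since the parity sequences agree and $n_1 \geq n_0$, induction gives $T^\ell(n_1) \geq T^\ell(n_0)$ for $0 \leq \ell \leq k$. The step multiplier $T(m)/m$ is $\frac{1}{2}$ for even $m$ and $\frac{3}{2}\bigl(1 + \frac{1}{3m}\bigr)$ for odd $m$, so it is non-increasing in $m$ within each parity class. Multiplying the step ratios then yields $T^k(n_1)/n_1 \leq T^k(n_0)/n_0 < 1$.

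You instead use the affine formula to reduce the question to the sign of the slope $3^f - 2^k$. You then extract $3^f < 2^k$ from the descent of $n_0$ through the crude bound $T^k(n_0) \geq \frac{3^f}{2^k} n_0$.

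The paper's route gives a multiplicative conclusion: the descent ratio can only improve as more significant bits are added. That is the form that matches the later ratio estimates via $\dip_B$. Your route gives an additive conclusion: the margin $n_1 - T^k(n_1)$ grows by exactly $2^k - 3^f$ with each increment of $a$. It also makes explicit that any descent at $n_0$ already forces $3^f < 2^k$. Theorem \ref{t:dip1} uses this slope condition, with a safety margin, in the converse direction.

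Both arguments rest on Lemma \ref{l:Tkfromlastbits} (yours through Lemma \ref{l:formulaforTk}) and on the sign of the $+1$ in $3m+1$. That is why neither carries over to $T'$, where the paper notes the theorem fails.
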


\begin{proof}
By Lemma \ref{l:Tkfromlastbits}, we use the same sequence of even and odd applications of $T$ to compute $T^k(n_0)$ and $T^k(n_1)$. If $m$ is even then $\frac{T(m)}{m} = \frac{1}{2}$ while if $m$ is odd then $\frac{T(m)}{m} = \frac{3}{2}\big( 1 + \frac{1}{3m} \big)$. Since $n_1 \geq n_0$ and both have the same sequence of even and odd applications of $T$, it follows by induction that $T^\ell(n_1) \geq T^\ell(n_0)$ for all $0 \leq \ell \leq k$, and hence also that $\frac{T^k(n_1)}{n_1} \leq \frac{T^k(n_0)}{n_0}$. The result follows.
\end{proof}

Given some $n_0 < 2^k$, the Descent Sieve works by checking if $T^k(n_0) < n_0$. If so, we exclude all $n$ given by adding more significant bits to $n_0$. This works because by induction we can assume that we have already verified convergence for $T^k(n)$ for any $n$ with the same last $k$ bits as $n_0$.

The basic idea behind this sieve is of course standard, and various authors have used it to precompute a table of mod $2^M$ values that need to be considered for some $M$. But the idea of inductively adding more significant bits appears to be new.

\subsection{The mod 9 Preimage Sieve}
This sieve relies on the following result:

\begin{lemma} \label{l:mod9sieve}
Suppose $n \equiv 2 \mod 3$. Then $n = T((2n-1)/3)$. Similarly, if $n \equiv 4 \mod 9$ then $n = T^3((8n-5)/9)$.
\end{lemma}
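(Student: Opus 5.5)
The plan is a direct verification: check that the proposed preimage is a natural number, then compute the forward orbit step by step. Each step uses the definition of $T$, with the parity of each intermediate value read off from the congruence hypothesis.

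For the first claim, write $n = 3j+2$ with $j \geq 0$. Then $(2n-1)/3 = (6j+3)/3 = 2j+1$, which is a positive odd integer. Since it is odd, $T(2j+1) = (3(2j+1)+1)/2 = 3j+2 = n$. Equivalently, $(3\cdot\frac{2n-1}{3}+1)/2 = (2n-1+1)/2 = n$.

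For the second claim, write $n = 9j+4$ with $j \geq 0$, so $m := (8n-5)/9 = (72j+27)/9 = 8j+3$. We then track the three steps.
\begin{itemize}
\item[(i)] $m = 8j+3$ is odd, so $T(m) = (24j+10)/2 = 12j+5$.
\item[(ii)] $12j+5$ is odd, so $T^2(m) = (36j+16)/2 = 18j+8$.
\item[(iii)] $18j+8$ is even, so $T^3(m) = 9j+4 = n$.
\end{itemize}
This is the pattern odd, odd, even. As a consistency check against Lemma \ref{l:formulaforTk}, we have $m \equiv 3 \pmod 8$, and $T^3(3)=4$ with $f=2$, so $T^3(3+8j) = 4 + 9j$.

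No step is a real obstacle. The only point needing care is confirming integrality and the parities at each stage, and both follow at once from writing $n$ in the form $3j+2$ or $9j+4$. Finally, I would remark that in both cases the preimage is smaller than $n$: $(2n-1)/3 < n$ and $(8n-5)/9 < n$. This is what makes the lemma usable as a sieve, since convergence of $n$ then follows from convergence of the smaller preimage.
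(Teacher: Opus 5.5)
Your proof is correct and follows the paper's own argument. The paper justifies the lemma later, in the subsection on the mod $9$ bitvector, through the identities $T(2j+1) = 3j+2$ and $T^3(8j+3) = 9j+4$, which are exactly your substitutions $n = 3j+2$ and $n = 9j+4$.
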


This means that if $n \equiv 2$, $4$, $5$ or $8 \mod 9$ then $n$ is on the path of either $(2n-1)/3$ or $(8n-5)/9$, so by induction it does not need to be considered.

This is another standard sieve. See for example \cite[Section 3.3]{Ba25}. There is a version of this sieve that considers the mod $3^k$ value of $n$ for any $k \geq 1$. The mod $27$ version does not exclude any more cases, and the mod $81$ version excludes just one more case, so we did not implement it.

\subsection{The Path-Merging Sieve}
We can also use Lemma \ref{l:mod9sieve} to prune a later part of the search tree.

\begin{lemma}
Suppose $m = T^k(n)$ and $m \equiv 2 \mod 3$. Then the path of $n$ joins the path of $(2m-1)/3$, so if $(2m-1)/3$ converges then so does $n$.
\end{lemma}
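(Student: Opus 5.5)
The plan is to apply the first half of Lemma \ref{l:mod9sieve} to $m$ and then use that $T$ is a function, so two orbits that meet once coincide from then on.

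\emph{Step 1: $m$ lies on the path of $(2m-1)/3$.} By Lemma \ref{l:mod9sieve}, since $m \equiv 2 \mod 3$ we have $m = T((2m-1)/3)$. For completeness I would check this directly. Write $m = 3j+2$. Then $(2m-1)/3 = 2j+1$, which is an odd positive integer. Hence $T(2j+1) = (3(2j+1)+1)/2 = 3j+2 = m$.

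\emph{Step 2: the two paths merge.} Set $n_1 = (2m-1)/3$. Then $T(n_1) = m = T^k(n)$, and therefore
\[
T^{\ell+1}(n_1) = T^{\ell}(m) = T^{k+\ell}(n) \quad \text{for all } \ell \geq 0.
\]
So from $m$ onward the orbit of $n$ is exactly the orbit of $n_1$ shifted by one step.

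\emph{Step 3: convergence transfers.} Suppose $n_1$ converges, meaning $T^{j}(n_1) = 1$ for some $j$. If $j \geq 1$, then by Step 2 with $\ell = j-1$ we get $T^{k+j-1}(n) = T^j(n_1) = 1$, so $n$ converges. If $j = 0$, then $n_1 = 1$ and $m = T(1) = 2$. In that case $T^{k+1}(n) = T(m) = 1$, so again $n$ converges.

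I do not expect a real obstacle. The only point needing care is to confirm that $(2m-1)/3$ is a positive odd integer, so that $T$ acts on it by the odd branch. This is exactly what the computation in Step 1 shows.
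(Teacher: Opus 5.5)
Your proof is correct and follows the same route as the paper, which states this lemma without a separate proof as an immediate consequence of Lemma \ref{l:mod9sieve}: since $m = T((2m-1)/3)$, the orbit of $n$ passes through $m$, and $m$ lies on the orbit of $(2m-1)/3$. Your direct check that $(2m-1)/3 = 2j+1$ is a positive odd integer, and your treatment of the edge case $n_1 = 1$, fill in details the paper leaves implicit.
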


This sieve works by checking if $m = T^k(n) \equiv 2 \mod 3$ and $(2m-1)/3 < n$. If these conditions hold then by induction $n$ does not have to be checked.

Note that if $n = n_0 + a2^k$ and $k \geq 1$ it follows from Lemma \ref{l:formulaforTk} that $T^k(n) \equiv T^k(n_0) \mod 3$, so this sieve only depends on the last $k$ bits of $n$.

\begin{lemma} \label{l:mod3check}
Suppose $m = T^k(n)$ has at least one odd step. Then $m \equiv 2 \mod 3$ if and only if there is an even number of even steps after the last odd step.
\end{lemma}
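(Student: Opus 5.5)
The plan is to look at the last odd step and track residues mod $3$ through the even steps that follow it. Let $j$ be the index of the last odd step, so $T^{j}(n)$ is odd and $T^{j+1}(n) = (3T^j(n)+1)/2$. After that come $r = k - j - 1$ even steps, so $m = T^{j+1}(n)/2^r$. The claim then reduces to computing $T^{j+1}(n) \bmod 3$ and observing how division by $2$ acts on residues mod $3$.

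First I would pin down the residue right after the odd step. Set $x = T^{j+1}(n)$, so $2x = 3T^j(n)+1 \equiv 1 \pmod 3$. Since $2 \equiv -1 \pmod 3$, this gives $x \equiv -1 \equiv 2 \pmod 3$. So immediately after any odd step the value is $\equiv 2 \pmod 3$; in particular it is never divisible by $3$.

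Next I would handle the even steps. Each one replaces $y$ by $y/2$, and since $2$ is its own inverse mod $3$, we get $y/2 \equiv 2y \equiv -y \pmod 3$. Each even step therefore swaps the residues $1$ and $2$ and never produces $0$. After $r$ even steps, $m \equiv (-1)^r \cdot 2 \pmod 3$. This is $2$ when $r$ is even and $1$ when $r$ is odd, which is exactly the claimed equivalence.

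There is no real obstacle. The only points needing care are the hypothesis that at least one odd step occurs, which guarantees $j$ exists and rules out residue $0$, and the indexing convention that "steps after the last odd step" means the $r = k-j-1$ applications of $T$ following the application at index $j$. A one-line induction on $r$ makes the sign pattern rigorous.
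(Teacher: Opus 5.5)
Your proof is correct. The paper gives no proof of this lemma (it only works through the path of $n = 15$), and your computation supplies the omitted check: $2T^{j+1}(n) = 3T^j(n)+1 \equiv 1 \pmod 3$ gives $T^{j+1}(n) \equiv 2$, and each subsequent halving multiplies the residue by $2^{-1} \equiv -1 \pmod 3$, so $m \equiv 2\cdot(-1)^r \pmod 3$.
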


For example, the path of $n = 15$ starts as $15 \mapsto 23 \mapsto 35 \mapsto 53 \mapsto 80 \mapsto 40 \mapsto 20$. Since there are $2$ even steps from the last odd step $53 \mapsto 80$ to $20$, we can conclude that $20 \equiv 2 \mod 3$ without doing the modular arithmetic calculation. Since $T(13) = 20$ we can then exclude $15$ from consideration. Because this example had a total of $6$ steps, it only depends on the last $6$ bits of $n = 15$. So this sieve excludes any $n \equiv 15 \mod 64$. For example, it also excludes $n = 79$, even though $T^6(79) = 101 > 79$ and the path of $79$ continues to increase for at least one more step.

There is a corresponding sieve that works whenever $m \equiv 4 \mod 9$, but it is less effective and we did not implement it.

Yet another version of this sieve works whenever $m \equiv 8 \mod 9$, because then $T^2((4m-5)/9) = m$ so it applies when $m \equiv 8 \mod 9$ and $(4m-5)/9 < n$. This sieve is also not very effective, so we did not implement it. More generally there is a version whenever $m \equiv -1 \mod 3^k$ for any $k$.

\subsection{The Odd-Even-Even Sieve}
This sieve relies on the following result:

\begin{lemma}[\protect{\cite[Lemma 9]{He23}}]
Suppose there are $\ell > 0$ odd steps in the Collatz sequence following $m$ and then at least $2$ even steps. Then the Collatz sequence of $m$ joins that of $(m-1)/2$.
\end{lemma}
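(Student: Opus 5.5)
The plan is to make the statement precise and then verify it by explicit computation with $T$, using the closed form that consecutive odd steps give. I read the hypothesis as follows: $m, T(m), \ldots, T^{\ell-1}(m)$ are all odd, and $T^{\ell}(m)$ and $T^{\ell+1}(m)$ are both even. In particular $m$ is odd, so $(m-1)/2$ is an integer, and it is positive as soon as $m \geq 3$; the case $m = 1$ is degenerate. The concrete target is the identity
\[
 T^{\ell+1}\bigl((m-1)/2\bigr) = T^{\ell+2}(m),
\]
which shows that the two paths merge.

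First I parametrise $m$. A run of $\ell$ consecutive odd steps forces $m \equiv -1 \pmod{2^\ell}$, so I write $m = 2^\ell q - 1$. Induction on $j$, using $T(2^j r - 1) = 3\cdot 2^{j-1} r - 1$, gives $T^{j}(m) = 3^j 2^{\ell-j} q - 1$ for $j \leq \ell$, and hence $T^\ell(m) = 3^\ell q - 1$. Next I extract the parity information. Since $T^\ell(m)$ is even, $q$ is odd. Since $T^{\ell+1}(m) = (3^\ell q - 1)/2$ is even, $3^\ell q \equiv 1 \pmod 4$. It then follows that $T^{\ell+2}(m) = (3^\ell q - 1)/4$.

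Now I run the same computation for $m' = (m-1)/2 = 2^{\ell-1} q - 1$. By the same induction it takes $\ell - 1$ odd steps; this is vacuous when $\ell = 1$, and the formula still holds in that case. These steps give $T^{\ell-1}(m') = 3^{\ell-1} q - 1$, which is even because $q$ is odd. Halving gives $x = (3^{\ell-1} q - 1)/2$. The key parity check is that $x$ is odd: this holds exactly when $3^{\ell-1} q \equiv 3 \pmod 4$, which is equivalent to $3^\ell q \equiv 1 \pmod 4$, the condition obtained above. Therefore
\[
 T^{\ell+1}(m') = T(x) = \frac{3x+1}{2} = \frac{3^\ell q - 1}{4} = T^{\ell+2}(m).
\]

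The only delicate points are bookkeeping. One is making sure the $\ell = 1$ edge case is covered by the same formulas. The other is translating the two-even-steps hypothesis into the congruence modulo $4$ that makes $x$ odd. That congruence is the step I expect to need the most care; everything else is the routine formula for runs of odd steps.
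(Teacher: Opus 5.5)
Your proof is correct and complete. With $m = 2^\ell q - 1$, the hypotheses become exactly ``$q$ odd and $3^\ell q \equiv 1 \pmod 4$''. You then show that $T^{\ell+2}(m)$ and $T^{\ell+1}\bigl((m-1)/2\bigr)$ both equal $(3^\ell q - 1)/4$. The key parity check is right: $x$ is odd iff $3^{\ell-1} q \equiv 3 \pmod 4$ iff $3^\ell q \equiv 1 \pmod 4$. For $\ell = 1$ the argument recovers the classical merge $T^3(8k+5) = T^2(4k+2) = 3k+2$.

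The paper gives no proof of this lemma; it only cites \cite[Lemma~9]{He23}. So there is no argument in the text to compare routes with, and your direct computation serves as a self-contained justification.

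Two small remarks. First, the assertion that $\ell$ initial odd steps force $m \equiv -1 \pmod{2^\ell}$ is the converse of the induction you actually write down. The same identity $T(2^j r - 1) = 3\cdot 2^{j-1} r - 1$ proves it: if $m = 2^j r - 1$ with $j < \ell$, then $T^j(m) = 3^j r - 1$ is odd by hypothesis, so $r$ is even. Second, $m = 1$ is not a degenerate case you need to set aside, because it fails the hypothesis: $1 \mapsto 2 \mapsto 1$ has only one even step, or equivalently $q = \ell = 1$ violates $3^\ell q \equiv 1 \pmod 4$. Hence $m \geq 3$ and $(m-1)/2 \geq 1$ whenever the lemma applies.
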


The sieve works by checking if $m = T^k(n)$ is such that $(m-1)/2 < n$. When that happens, we look for a sequence of odd steps followed by $2$ even steps, and if such a sequence occurs we exclude $n$.

See Figure \ref{f:foursieves} for a summary of all four sieves. The $y$-axis is the value of $T^k(n)$ on a log scale, and the horizontal line is the starting value $n$.

\begin{figure}[htbp]
  \centering

  \begin{subfigure}{0.45\textwidth}
    \centering
    \includegraphics[width=\linewidth]{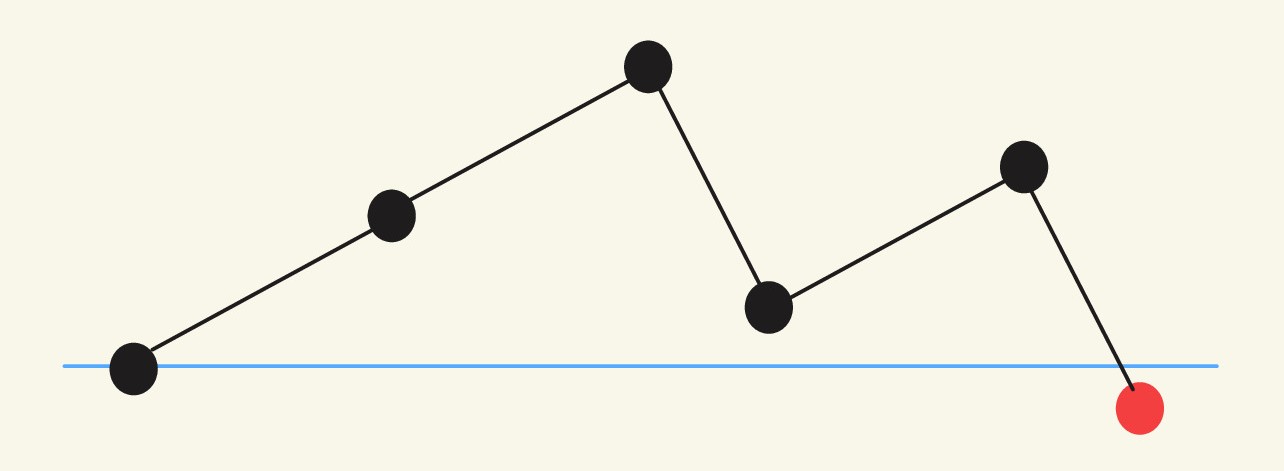}
    \caption{The Descent Sieve}
  \end{subfigure}
  \hfill
  \begin{subfigure}{0.45\textwidth}
    \centering
    \includegraphics[width=\linewidth]{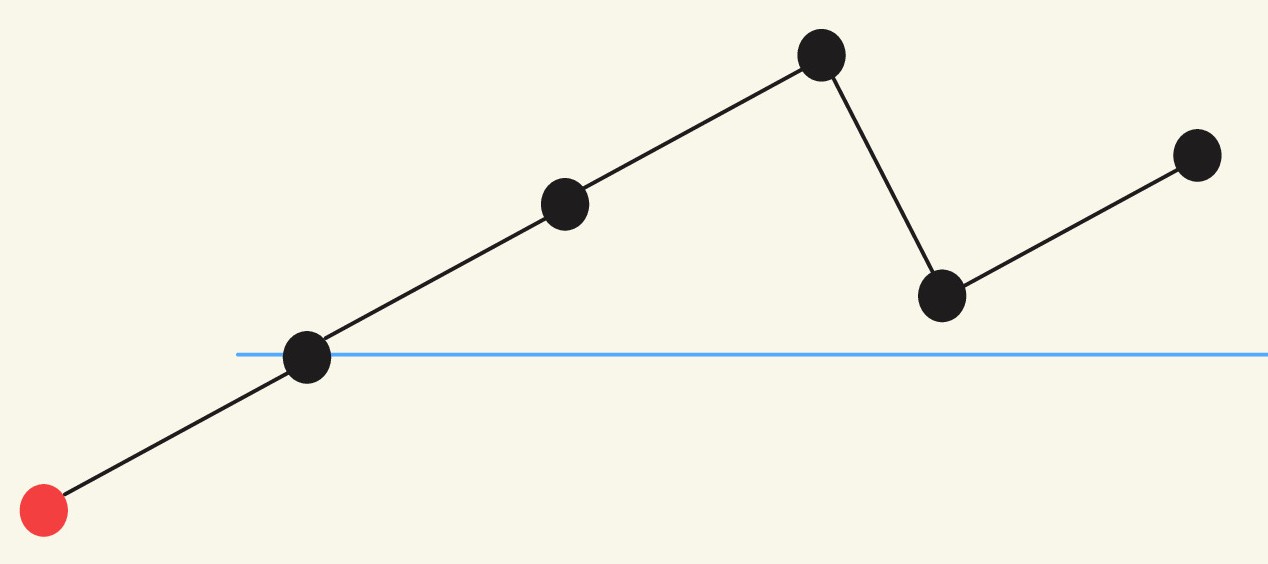}
    \caption{The mod 9 Preimage Sieve}
  \end{subfigure}

  \medskip

  \begin{subfigure}{0.45\textwidth}
    \centering
    \includegraphics[width=\linewidth]{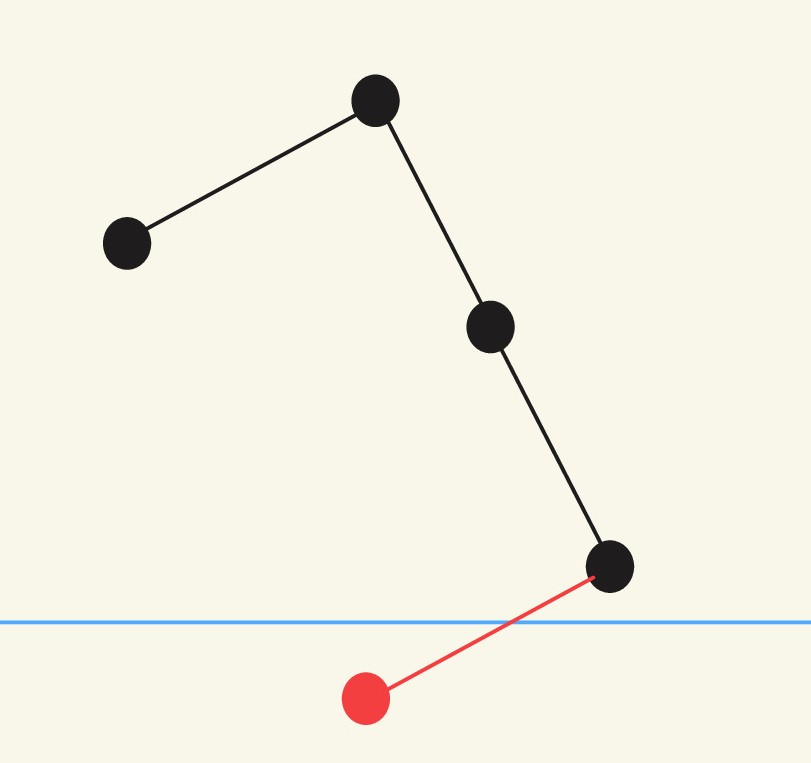}
    \caption{The Path-Merging Sieve}
  \end{subfigure}
  \hfill
  \begin{subfigure}{0.45\textwidth}
    \centering
    \includegraphics[width=\linewidth]{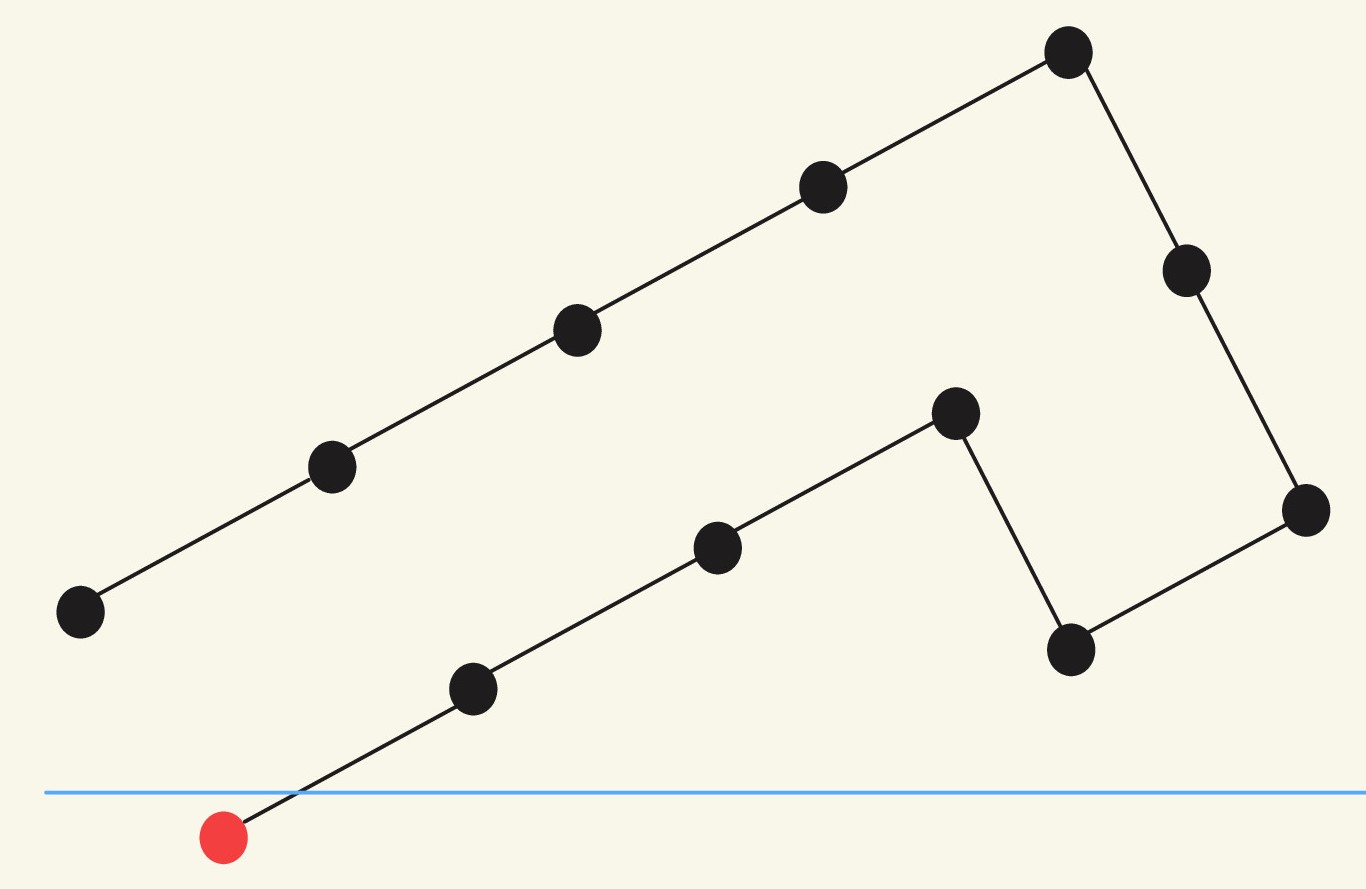}
    \caption{The Odd-Even-Even Sieve}
  \end{subfigure}

  \caption{The blue line indicates the starting value $n$, and the red dot indicates the value $n_1 < n$ whose path joins that of $n$.}
  \label{f:foursieves}
\end{figure}

\section{Step (1) of Algorithm \ref{a:highlevel}}
We now describe Step (1) of Algorithm \ref{a:highlevel} in more detail. This uses the Descent Sieve, the Path Merging Sieve, and the Odd-Even-Even sieve. We cannot use the mod 9 Preimage Sieve, as the mod 9 value of $n$ requires all the bits of $n$.

A limited version of the Descent Sieve has been used many times before. For example, in \cite{HIN17} the authors used this idea to precompute a sieve of size $2^k$ for $k$ as large as $37$, and in \cite{Ba25} the author used this idea to precompute a sieve of size $2^{34}$ (for use on a CPU) or size $2^{24}$ (for use on a GPU).

We use this algorithm for the least significant $M = N - A$ bits of $n$, and the method from the next section for the last $A$ bits, for some $A$ between $6$ and $10$. We can now describe Step (1) in Algorithm \ref{a:highlevel} in more detail, as Algorithm \ref{a:part1}.

\begin{algorithm}[H]
 \label{a:part1}
 \textbf{Require:} $n_0$, $m$, $k$ and $f$ are positive integers, $n_0 < 2^k$.
 \begin{enumerate}
  \item if $n_0$ is excluded by the Descent Sieve, the Path-Merging Sieve, or the Odd-Even-Even Sieve, stop;
  \item if $k = M$, output $(n_0, m, f)$ and then stop;
  \item if $m$ is even, call Algorithm \ref{a:part1} with $(n_0, m/2, k+1, f)$ and with $(n_0 + 2^k, (3(m+3^f)+1)/2, k+1, f+1)$;
  \item if $m$ is odd, call Algorithm \ref{a:part1} with $(n_0, (3m+1)/2, k+1, f+1)$ and $n_0 + 2^k, (m+3^f)/2, k+1, f)$.
 \end{enumerate}
 \caption{Step (1) in Algorithm \ref{a:highlevel}}
\end{algorithm}

We start Algorithm \ref{a:part1} with $(n_0, m, k, f) = (3, 8, 2, 2)$. We always have $m = T^k(n_0)$. The Descent Sieve is a simple comparison. For the Path-Merging Sieve we keep track of the number of even applications of $T$ after the last odd one and use Lemma \ref{l:mod3check} instead of an explicit mod 3 calculation. We also keep track of when the Odd-Even-Even sieve is in play.

\section{A rational approximation to $\frac{\ln(3)}{\ln(2)}$ and a precomputed bitvector}
In this section we describe the bitvectors used in Step (2) of Algorithm \ref{a:highlevel}. This consists of one bitvector for looking ahead another $B$ steps, for each value of $f$ coming out of Algorithm \ref{a:part1}, plus one bitvector for the mod $9$ Preimage Sieve. The bitvector for looking ahead another $B$ steps incorporates the Descent Sieve, the Path-Merging Sieve, and the Odd-Even-Even Sieve. Together with the bitvector for the mod $9$ Preimage Sieve that means we apply all four sieves in this step, with only a single bitwise \texttt{and} operation.

The key idea is to examine the last $B$ bits of $m = T^{N-A}(n)$ to check if $T^k(n)$ goes below $n$, or joins the path of some $n_1 < n$, for some $N - A < k \leq N - A + B$. These last bits of $m$ determine the next $B$ steps in the sequence of even and odd applications of $T$, starting from $m$.

Under mild conditions on $n$ and $k$ we only need to know $f = f_{N-A}(n)$ and the last $B$ bits of $m$ to determine if this happens.

For ease of computation we use a rational approximation to $\frac{\ln(3)}{\ln(2)}$. We have $\frac{485}{306} > \frac{\ln(3)}{\ln(2)}$, with $\frac{485}{306} - \frac{\ln{3}}{\ln{2}} = 0.000004819\ldots$. It follows that $3^{306}$ is slightly smaller than $2^{485}$, with $\frac{3^{306}}{2^{485}} = 0.9989\ldots$.

\begin{thm} \label{t:dip1}
Suppose $T^k(n)$ has $f = f_k(n)$ odd applications and $485 f \leq 306 k$. Suppose also that $T^m(n) \geq 99\,781$ for all $0 \leq m \leq k$. Then $T^k(n) < n$.
\end{thm}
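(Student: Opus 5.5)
We prove Theorem \ref{t:dip1} by bounding the multiplicative growth along the path, as in the proof of Theorem \ref{t:excludefromlastbits}. Write $m_j = T^j(n)$ for $0 \leq j \leq k$. An even step gives $m_{j+1}/m_j = \frac{1}{2}$, and an odd step gives $m_{j+1}/m_j = \frac{3}{2}\bigl(1 + \frac{1}{3m_j}\bigr)$. Taking the product over the $k$ steps, of which exactly $f$ are odd, we get
\[
 \frac{T^k(n)}{n} = \frac{3^f}{2^k} \prod_{j \textnormal{ odd step}} \Bigl(1 + \frac{1}{3m_j}\Bigr).
\]
Using $m_j \geq 99\,781$, each correction factor is at most $1 + \frac{1}{299\,343}$. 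So it suffices to show
\[
 \frac{3^f}{2^k}\Bigl(1 + \frac{1}{299\,343}\Bigr)^f < 1,
\]
that is, $f\bigl(\ln 3 + \ln(1+\tfrac{1}{299\,343})\bigr) < k \ln 2$.

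Next we use the hypothesis $485 f \leq 306 k$, which gives $k \ln 2 \geq \frac{485}{306} f \ln 2$. It then suffices, for $f \geq 1$, to prove the numerical inequality
\[
 \ln 3 + \ln\Bigl(1 + \frac{1}{299\,343}\Bigr) < \frac{485}{306}\ln 2,
\]
which is equivalent to $\ln(1 + \frac{1}{3\cdot 99\,781}) < \bigl(\frac{485}{306} - \frac{\ln 3}{\ln 2}\bigr)\ln 2$. The right side is $0.000004819\ldots \times 0.6931\ldots \approx 3.340\times 10^{-6}$. The left side is below $\frac{1}{299\,343} \approx 3.3406 \times 10^{-6}$.

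This numerical comparison is the main obstacle. The constant $99\,781$ appears to be chosen as essentially the threshold at which the two sides agree, so the verification must be done carefully with enough precision. Instead of the crude bound $\ln(1+x) < x$, I would use the sharper bound $\ln(1+x) \leq x - x^2/2 + x^3/3$. Alternatively, I would rewrite the condition exactly as $\bigl(3 + \frac{1}{99\,781}\bigr)^{306} < 2^{485}$, which is the per-step inequality raised to the power $306$, and check it with high-precision arithmetic.

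The case $f = 0$ needs separate handling, because then the strict inequality has to come from the even steps alone. If $f = 0$ and $k \geq 1$, all steps are even and $T^k(n) = n/2^k < n$. The case $k = 0$ is excluded by the hypotheses: the statement implicitly needs $k \geq 1$, since with $k = 0$ we have $f = 0$ and the conclusion $T^0(n) < n$ is false, so I would note that $k > 0$ is assumed. Only the values $m_j$ with $j < k$ enter the product, so the hypothesis for $j = k$ is not even needed.
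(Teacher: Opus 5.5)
Your argument is correct and is essentially the paper's proof. Both bound each odd-step factor by $\frac{3}{2}\bigl(1+\frac{1}{3\cdot 99\,781}\bigr)$ and use $485f \le 306k$ to get $\frac{3^f}{2^k}(1+\cdots)^f<1$; the paper's proof writes $99\,783$ here, but your $99\,781$ is the constant the hypothesis actually justifies, and your remark that $k\ge 1$ must be assumed is also right. The numerical step you left open does hold, and even the crude bound $\ln(1+x)<x$ suffices, since $\frac{1}{299\,343}\approx 3.3406494\times 10^{-6}$ lies just below $\frac{485}{306}\ln 2-\ln 3\approx 3.3406508\times 10^{-6}$.
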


\begin{proof}
Each odd application increases the number by a factor of at most $\frac{3}{2} \big( 1 + \frac{1}{3 \cdot 99\,783} \big)$, so $T^k(n)$ increases by a factor of at most $\frac{3^f}{2^k} \big( 1 + \frac{1}{3 \cdot 99\,783} \big)^f < 1$.
\end{proof}
The requirement of $T^m(n) \geq 99\,781$ for all $0 \leq m \leq k$ can be relaxed, but we leave the details to the reader since we do not need a better bound.

One can obviously prove a similar result for an even closer approximation to $\frac{\ln(3)}{\ln(2)}$, but the added benefit is very small.

We will take it for granted that the Collatz Conjecture has been verified for all $n \leq 99\,781$, so this means that for a given $n$ we can abort the search as soon as we have computed $T^k(n)$ and we find that $485 f_k(n) \leq 306 k$. In fact we do not even need to compute $T^k(n)$ if we can compute $f_k(n)$.

The same applies to the Path-Merging Sieve and the Odd-Even-Even sieve.

\subsection{Some bitvectors}
For each $m \in \{0,1,\ldots,2^B-1\}$ we compute the sequence of even and odd applications in $T^B(m)$.
\begin{defn}
Define a set $S_B'(m)$ of integers by taking the union over $k \in \{0,\ldots,B\}$ of $306k - 485f_k(m)$. Then define $\dip_B'(m) = \max_{s' \in S'}(s)$.

\end{defn}
The significance of $\dip_B'(m)$ is that for any $m$ for which $T^k(m) \geq 99\,781$ for all $0 \leq k \leq B$, $T^k(m) \leq 2^{-\dip_B'(m_0)/306} m$ for some $0 \leq k \leq B$, so this is useful for the Descent Sieve. We can also incorporate the Path-Merging Sieve and the Odd-Even-Even Sieve:

\begin{defn}
Define a set $S_B(m)$ of integers by taking the union over $k \in \{0,\ldots,B\}$ of $306k - 485f_k(m)$ and also the following numbers: If $T^k(m) \equiv 2 \mod 3$, include $306(k-1) - 485(f_k(m)-1)$. If $T^k(m)$ is followed by a positive number of odd steps and then 2 even steps, include $306(k+1) - 485f_k(m)$.

Then define $\dip_B(m) = \max_{s \in S}(s)$.
\end{defn}
It follows that for any $m$ for which $T^k(m)$ is large for all $0 \leq k \leq B$, either the path of $m$ dips below $2^{-\dip_B(m)/306} m$ or it joins the path of some $m_1 < 2^{-\dip_B(m_0)/306} m$. In either case, we can exclude $n$ if $2^{-\dip_B(m_0)/306} m < n$.

Now suppose we start with some $n$ and a calculation of $m = T^{N-A}(n)$. Suppose we also know $f = f_{N-A}(n)$. Then we can use $\dip_B(m)$ to check if one of our sieves apply:

\begin{thm} \label{t:dip2}
Let $m = T^{N-A}(n)$, let $f = f_{N-A}(n)$, and suppose $\dip_B(m) \geq 485f - 306(N-A)$. Suppose also that $T^m(n) \geq 99\,781$ for all $0 \leq m \leq N - A + B$. Then either $T^k(n) < n$ for some $N - A \leq k \leq N - A + B$ or the path of $n$ joins the path of some $n_1 < n$.
\end{thm}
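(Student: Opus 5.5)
The plan is to concatenate the first $N-A$ steps from $n$ to $m$ with the next $j\le B$ steps from $m$, and then apply the estimate from the proof of Theorem \ref{t:dip1} to the combined path. Write $K = N-A$. Let $s \in S_B(m)$ be an element realising $\dip_B(m)$, so $s \geq 485f - 306K$. Lemma \ref{l:Tkfromlastbits} guarantees that the parity pattern of $T^j(m)$ for $0\le j\le B$ depends only on the last $B$ bits of $m$. Hence $f_{K+j}(n) = f + f_j(m)$ and $T^{K+j}(n) = T^j(m)$. This lets us translate each of the three kinds of elements of $S_B(m)$ into a statement about the full path of $n$. The argument splits into three cases according to the origin of $s$.

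\emph{Case 1: $s = 306j - 485f_j(m)$.} Then $485(f + f_j(m)) \le 306(K+j)$, so $485 f_{K+j}(n) \le 306(K+j)$. The hypothesis $T^i(n) \ge 99\,781$ holds for all $i \le K+B$. Theorem \ref{t:dip1}, applied with $k = K+j$, therefore gives $T^{K+j}(n) < n$. This is the descent conclusion. Strictly this gives $k$ with $0\le k\le K+B$ rather than $N-A\le k$; I would note that small $j$ still lands in that range, so nothing is lost.

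\emph{Case 2: $T^j(m) \equiv 2 \bmod 3$ and $s = 306(j-1) - 485(f_j(m)-1)$.} Put $m' = T^j(m) = T^{K+j}(n)$ and $n_1 = (2m'-1)/3$, so $T(n_1) = m'$ by Lemma \ref{l:mod9sieve}. The goal is $n_1 < n$. Since $m' \equiv 2 \bmod 3$ and $j\ge 1$, the last step into $m'$ was even unless the path had an odd step; in any case I would bound $n_1$ directly. We have $n_1 < \tfrac23 m'$. The multiplicative bound from the proof of Theorem \ref{t:dip1} gives $m' \le \frac{3^{F}}{2^{K+j}}(1+\epsilon)^{F} n$ with $F = f+f_j(m)$. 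Then $n_1/n < \frac{3^{F-1}}{2^{K+j-1}}(1+\epsilon)^F$. The condition $306(K+j-1) \ge 485(F-1)$ is exactly what the shifted element encodes after adding $306K - 485f$. The rational approximation $3^{306}<2^{485}$, with the slack $0.9989$, then absorbs the factor $(1+\epsilon)^F$, exactly as in Theorem \ref{t:dip1}. The path of $n$ then joins that of $n_1<n$.

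\emph{Case 3: $T^j(m)$ is followed by $\ell>0$ odd steps and then two even steps, with $s = 306(j+1)-485f_j(m)$.} By \cite[Lemma 9]{He23} the path of $m'' = T^j(m)$ joins that of $n_1 = (m''-1)/2 < m''/2$. The same estimate gives $n_1/n < \frac{3^{F}}{2^{K+j+1}}(1+\epsilon)^F \le 1$ under $306(K+j+1) \ge 485F$, which is what $s$ encodes.

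The main obstacle is the bookkeeping for the factor $(1+\epsilon)^F$. The number $F$ can be as large as roughly $K+B$. We must check that $\frac{3^{F'}}{2^{k'}}(1+\tfrac{1}{3\cdot 99\,781})^F<1$ whenever $485F' \le 306k'$, where $F'$ may be $F-1$ while the error exponent is $F$. In Case 2 the extra factor $\tfrac{2}{3}$ exactly cancels one power of $\tfrac32$, and the stray $(1+\epsilon)$ must be absorbed. I would handle this by using a strict version of Theorem \ref{t:dip1}'s estimate, keeping the unused margin in $(3^{306}/2^{485})$ and noting that $(1+\epsilon)^{F}$ stays far below $1/0.9989$ for $F$ in the relevant range $F \lesssim 10^3$. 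The needed inequalities then follow uniformly.
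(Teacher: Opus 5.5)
Your plan is the argument the paper sketches just before the statement: concatenate the $N-A$ steps from $n$ to $m$ with the next $j\le B$ steps, then run the per-step estimate from the proof of Theorem~\ref{t:dip1} on each of the three kinds of elements of $S_B(m)$. Cases 1 and 3 go through as you describe. In Case 1 you get $k=K+j\in[K,K+B]$ automatically, so your worry about the range of $k$ is unfounded.

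The gap is in Case 2. Your bound $n_1/n<3^{F-1}2^{-(K+j-1)}(1+\epsilon)^F$ has $F$ factors of $1+\epsilon$ but only $F-1$ factors of $3$, and nothing is left to absorb the extra one.

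\begin{itemize}
\item The slack from $485(F-1)\le 306(K+j-1)$ is not a fixed factor $0.9989$. It is $(3^{306}/2^{485})^{(F-1)/306}=e^{-(F-1)\cdot 3.3406\ldots\times 10^{-6}}$, a slack per odd step.
\item The per-step error uses up essentially all of it. In fact $99\,781$ is exactly the least integer $x$ with $\tfrac32\bigl(1+\tfrac1{3x}\bigr)<2^{179/306}$. What remains is only about $7\times 10^{-12}$ per odd step in logarithmic terms.
\item Absorbing one extra factor $1+\epsilon\approx e^{3.3\times 10^{-6}}$ would therefore take roughly $5\times 10^{5}$ odd steps, while you have at most $N-A+B$.
\item Your numerical claim is also false: $(1+\epsilon)^{1000}\approx e^{0.0033}>1/0.9989\approx e^{0.0010}$.
\end{itemize}

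As written, Case 2 only yields $n_1<(1+\epsilon)n$.

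The missing idea is to undo the last odd step exactly, rather than through $n_1<\tfrac23 m'$. If $F=0$, then $n_1<\tfrac23 m'\le\tfrac23 n$. Otherwise let $t$ be the index of the last odd step before $m'$, and set $r=K+j-1-t\ge 0$. Then $m'=(3T^t(n)+1)/2^{r+1}$, so
\[
n_1=\frac{2m'-1}{3}=\frac{T^t(n)}{2^{r}}-\frac{1-2^{-r}}{3}\le\frac{T^t(n)}{2^{r}}.
\]
The right-hand side is reached from $n$ in $K+j-1$ steps, of which only $F-1$ are odd, all taken at values $\ge 99\,781$. The estimate of Theorem~\ref{t:dip1} then gives
\[
n_1\le\Bigl(\tfrac32(1+\epsilon)\Bigr)^{F-1}2^{-(K+j-F)}\,n<2^{(485(F-1)-306(K+j-1))/306}\,n\le n
\]
for $F\ge 2$, and $n_1\le 2^{-(K+j-1)}n$ for $F=1$. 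No stray factor appears.

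In the parameter range actually used, you could instead rescue your own bound by integrality. The quantity $306(K+j-1)-485(F-1)$ is a nonnegative integer that can vanish only if $485\mid K+j-1$. So for $K+j<486$ it is at least $1$ unless $F=K+j=1$, and the resulting factor $2^{-1/306}$ swamps $1+\epsilon$. That does not, however, prove the statement for arbitrary $N$, $A$, $B$.
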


Because $\dip_B(m) \geq 0$ for any $m$, Theorem \ref{t:dip2} contains Theorem \ref{t:dip1} as a special case.

There is a small chance that $T^k(n) < n$ even if $\dip_B(m) < 485f - 306(N-A)$, but this can only happen when $k$ is large and we will ignore it. (If this happens we end up checking $n$ in Step (3) of Algorithm \ref{a:highlevel} even though we do not have to.)

If the conditions in Theorem \ref{t:dip2} hold, the path of $n$ joins the path of some $n_1 < n$ and does not need to be considered.

For a fixed $f$, suppose $f_{N-A}(n) = f$. Then we must have $485f > 306(N-A)$, or $f \geq \lceil \frac{306(N-A)+1}{485} \rceil$. Here $\lceil - \rceil$ denotes the ceiling function, which rounds up to the nearest integer. We will consider a bitvector $BV_i$ of length $2^B$ corresponding to $f = \lceil \frac{306(N-A)+1}{485} \rceil + i$ for $i \geq 0$.

For example, suppose $N = 72$ and $A = 6$ so $N - 6 = 66$. Then we consider one bitvector $BV_i$ corresponding to $f = 42 + i$ for each $i \geq 0$.

In this example, if $n$ is so that $m = T^{66}(n)$ and $f_{66}(n) = 42$ it follows that if $\dip_B(m) \geq 485 \cdot 42 - 66 \cdot 306 = 174$ then $T^k(n) < n$ for some $66 < k \leq 66+B$ or the path of $n$ joins that of $n_1 < n$. We prepare the bitvector $BV_0$ accordingly, setting one bit for every $m \in \{0,1,\ldots,2^B-1\}$ with $\dip_B(m) < 174$.

See Figure \ref{f:fractionset} for how effective sieving with $BV_i$ is for the above example. We see that if $i$ is small then this sieve is very effective while $i$ is large it has almost no effect. In the CPU implementation we choose to limit ourselves to $i < 8$ and run the remaining cases without these sieves, while in the GPU implementation we chose to limit ourselves to $i < 16$ and run the very small number of remaining cases separately without this sieve on the CPU.

\begin{figure}
\begin{tabular}{ll}
Density of bits set for $BV_0$ & 0.0198687911 \\
Density of bits set for $BV_1$ & 0.0947877764 \\
Density of bits set for $BV_2$ & 0.2287583947 \\
Density of bits set for $BV_3$ & 0.4023408889 \\
Density of bits set for $BV_4$ & 0.5846776366 \\
Density of bits set for $BV_5$ & 0.7447901368 \\
Density of bits set for $BV_6$ & 0.8630392551 \\
Density of bits set for $BV_7$ & 0.9365259408 \\
\end{tabular}
\caption{The fraction of bits set in each bit vector $BV_i$ for $N = 72$, $A = 6$ and $B = 24$ for $0 \leq i \leq 7$.}
\label{f:fractionset}
\end{figure}

To speed up the computation, we do not order the bits in the bitvector for $BV_i$ linearly according to the last $24$ bits in $m$. Instead, if $BV_i$ corresponds to $f$ then bit number $a$ of $BV_i$ corresponds to $a 3^f \mod 2^B$. We do this so that the entries for $T^{N-A}(n_0)$ and $T^{N-A}(n_0 + 2^{N-A}) = T^{N-A}(n_0) + 3^f$ are consecutive. Hence the entries for $T^{N-A}(n_0 + a2^{N-A})$ for $a = 0, 1, \ldots, 2^A-1$ occupy $2^A$ adjacent entries in the bitvector.

\subsection{A bitvector for the mod $9$ Preimage Sieve} \label{ss:mod9sieve}
We refer once again to \cite[Section 3.2]{Ba25}. Because $T(2n+1) = 3n+2$, any $n \equiv 2 \mod 3$ does not have to be considered because it is in the path of a smaller number. Similarly, $T^3(8n+3) = 9n+4$, so any $n \equiv 4 \mod 9$ does not have to be considered. Together this removes $4/9$ of all the cases. As discussed in op.\ cit., using a mod $3^k$ Preimage Sieve for larger $k$ has limited additional benefit, so we stick with the mod $9$ Preimage Sieve.

Now we can complete our discussion of Step (2) of Algorithm \ref{a:highlevel}. Given some $f = f_{N-A}(n_0)$ for $n_0 < 2^{N-A}$, we use the precomputed bitvector $BV_i$ from above together with a precomputed bitvector for the mod $9$ Preimage Sieve to indicate for which $a$ the mod $9$ value of $n = n_0 + a 2^{N-A}$ is allowed, for $a=0,\ldots,2^A-1$.

Now we can use a single bitwise \texttt{and} operation of the two bitvectors to determine for which $a \in \{0, 1, \ldots, 2^A-1\}$ the number $n = n_0 + a2^{N-A}$ survives all the sieves.

\section{Step (3) of Algorithm \ref{a:highlevel}}
The output of Step (2) of Algorithm \ref{a:highlevel} is a set of natural numbers $n = n_0 + a2^{N-A}$ which need to be checked manually.

For our CPU implementation we used Lemma \ref{l:formulaforTk} to do $16$ steps at a time until $T^k(n)$ falls below $n$. The number of steps was dictated in part by the fact that the information needed to do $16$ steps can be stored in $32$ bits, while the information needed to do $17$ steps cannot (at least not easily). This sped up the program by a bit more than a factor of $2$, compared to using the same algorithm as the GPU version.

For our GPU implementation we used \cite[Algorithm 2]{Ba25}, which alternates between doing several odd applications of $T$ in a row and several even applications of $T$ in a row until $T^k(n)$ falls below $n$.

\subsection{Some empirical evidence}
In Figure \ref{f:casestocheck} we record how many integers need to be checked for each $N$ for $N = 35,\ldots,45$ using this method. Here $A = 6$ and $B = 24$, and we only used $8$ bitvectors $BV_0,\ldots,BV_7$.

\begin{figure}
\begin{tabular}{lrl}
N & cases to check & percentage to check \\
35 &      31\,109\,345 & 0.0905\% \\
36 &      56\,414\,851 & 0.0821\% \\
37 &     106\,551\,583 & 0.0775\% \\
38 &     205\,942\,265 & 0.0749\% \\
39 &     390\,055\,737 & 0.0710\% \\
40 &     756\,583\,624 & 0.0688\% \\
41 &  1\,408\,862\,059 & 0.0641\% \\
42 &  2\,658\,801\,769 & 0.0605\% \\
43 &  5\,143\,671\,444 & 0.0585\% \\
44 &  9\,392\,592\,637 & 0.0534\% \\
45 & 17\,807\,287\,297 & 0.0506\% \\
\end{tabular}
\caption{The percentage of all numbers to check for $N = 35,\ldots, 45$.}
\label{f:casestocheck}
\end{figure}

From this data it looks like considering one additional bit leads to about $1.9$ times as many cases. If the trend line holds, that means we have to consider approximately 0.0127\% of all numbers up to $2^N$ when $N = 72$ and $A = 6$, or 0.0156\% when $A = 10$.

We can compare this to \cite[Table 3]{HIN17}, where the authors conclude that using a size $2^{37}$ sieve leaves $0.70\%$ of numbers to be considered. Hence we get an improvement of about a factor of about 45 compared to their method.

\section{Some comments on the GPU implementation}
For the GPU implementation, we chose to do Step (1) of Algorithm \ref{a:highlevel} on the CPU and Steps (2) and (3) on the GPU. We prepare the necessary bitvectors on the CPU, and this takes about 3 seconds.

Doing Steps (2) and (3) on the GPU minimises the amount of data that has to be transferred from main memory to GPU memory. The bitvectors only have to be moved once, and then we process $2^{16}$ values of $n_0 < 2^{N-A}$ with the same $f$ at the same time, with each GPU thread responsible for one $n_0$. Because all the cases have the same $f$, this means each thread ends up being responsible for checking a similar number of cases $n = n_0 + a2^{N-A}$ and each thread runs for a similar amount of time. Typically this might be about 20 cases for the smallest possible $f$ and about 500 for a larger $f$.

We are of course free to choose the constants $A$ and $B$. Some benchmarking suggested $A = 10$ and $B = 24$ are good choices.

If all we care about is minimising the number of cases to be analysed in Step (3) then choosing $A = 0$ and the largest possible $B$ for which the bitvectors fit in memory is optimal. But it is a balancing act, for several reasons:
\begin{enumerate}
 \item Step (1) is relatively slow. It has branching, recursion, and arithmetic operations on 128-bit integers.
 \item Step (2) is very fast except for two bottlenecks: We need to compute a mod $9$ value, and we need to look up at least one bit in a big bitvector. With $B = 24$ most of the bitvector lives in the L3 cache (or the equivalent for the GPU) while if $B > 24$ memory access is slower. If we were to look up just one bit, for a single case, then the mod $9$ calculation and the latency from looking up that one bit in a bitvector would dominate. But if we look up $2^A$ bits we treat $2^A$ cases at the same time while only paying for the mod $9$ calculation and the latency once.
 \item The data generated in Step (1) has to be transferred from main memory to GPU memory for processing in Steps (2) and (3). If $A = 0$ then that is a lot of data to move around, while if $A = 10$ it is an almost trivial amount of data.
\end{enumerate}

Of course the choice of $A$ and $B$ that is optimal on one computer might not be optimal on a different computer.

\section{The analogue of the Collatz conjecture for negative integers}
The Collatz function $C$ and the modified Collatz function $T$ makes sense for any integer $n$. As is well known, if we extend the Collatz function to all integers there are a few more cycles:
\begin{eqnarray*}
 & & (1, 2, 1) \\
 & & (0) \\
 & & (-1) \\
 & & (-5, -7, -10, -5) \\
 & & (-17, -25, -37, -55, -82, -41, -61, -91, -136, -68, -34, -17)
\end{eqnarray*}
The Collatz conjecture, extended to all integers, says that every integer $n$ eventually hits one of these cycles.

The algorithm in this paper applies, without any substantial changes, to negative integers as well. We can implement this using the function
\[
 T'(n) = \begin{cases} n/2 \quad & \textnormal{if $n$ is even} \\
                       (3n-1)/2 \quad & \textnormal{if $n$ is odd}
         \end{cases}
\]
for positive $n$, to avoid having to deal with negative numbers in software. Theorem \ref{t:excludefromlastbits} does not hold, but we can instead check if $306k - 485f_k(n) \geq 0$. With this formulation of the Descent Sieve, all four sieves described in Section \ref{s:prelim} apply, and the running time of our program using $T'$ in place of $T$ is almost identical. In Appendix \ref{a:pathrecordsnegative} we record the path records we found for $T'$. We see that the data supports the analogue of the conjecture predicted by the independent random walk process in \cite[Theorem 2.3]{LaWe92} that $\lim \sup \frac{\log(t(n))}{\log(n)} = 2$ for $T'$, and the data for $T'$ does not look substantially different from the data for $T$.

\section{Conclusion}
We claim that the algorithm presented in this paper is about 50-100 times faster than other algorithms discussed in the literature, both because we end up with fewer cases to consider and because some of the work can be done very effectively using bitwise operations.

We hope to report on the verification of convergence for $n < 2^N$ for some $N \geq 72$ in future work.

\appendix
\section{Path records for negative numbers} \label{a:pathrecordsnegative}
We consider the function $T'$ on positive integers instead of $T$ on negative integers. Define $t'(n)$ to be highest value occuring in the sequence starting at $n$. We say $n > 0$ is a path record if $t'(n) > t'(n_1)$ for all $n_1 < n$. See Figure \ref{f:pathrecords}. This should be compared to the table in \cite{Ba_table}.

\clearpage
\begin{figure}[p]
\vspace*{-80pt}
\hspace*{-40pt}
\resizebox{!}{0.60\textheight}{
\begin{tabular}{l|r|r|l|l}
number & $n$ & $t'(n)$ & $\log(t'(n))/\log(n)$ & comments \\
\hline
 1 &                       1 &                                                      1 &  & cycle \\
 2 &                       2 &                                                      2 & 1.0000 & even \\
 3 &                       3 &                                                      4 & 1.2619 & \\
 4 &                       5 &                                                     10 & 1.4307 & cycle \\
 5 &                       9 &                                                     28 & 1.5000 & \\
 6 &                      17 &                                                    136 & 1.7340 & cycle \\
 7 &                      33 &                                                    244 & 1.5722 & \\
 8 &                      65 &                                                    820 & 1.6072 & \\
 9 &                     129 &                                                  2 188 & 1.5825 & \\
10 &                     153 &                                                 16 606 & 1.9324 & log ratio \\
11 &                     321 &                                                 66 430 & 1.9239 & \\
12 &                   1 601 &                                                131 356 & 1.5973 & \\
13 &                   1 889 &                                                413 344 & 1.7143 & \\
14 &                   3 393 &                                                417 718 & 1.5921 & \\
15 &                   4 097 &                                                957 664 & 1.6557 & \\
16 &                   6 929 &                                              1 439 776 & 1.6034 & \\
17 &                   8 193 &                                              1 594 324 & 1.5849 & \\
18 &                  10 497 &                                              2 908 468 & 1.6075 & \\
19 &                  11 025 &                                             40 219 750 & 1.8812 & \\
20 &                  18 273 &                                             44 442 028 & 1.7945 & \\
21 &                  28 161 &                                            195 046 228 & 1.8631 & \\
22 &                  74 585 &                                            477 250 624 & 1.7811 \\
23 &                  85 265 &                                            510 919 012 & 1.7661 \\
24 &                 149 345 &                                          4 837 921 750 & 1.8717 \\
25 &                 558 341 &                                         39 156 432 022 & 1.8432 \\
26 &                 839 429 &                                         39 246 157 990 & 1.7883 \\
27 &               1 022 105 &                                         45 360 267 382 & 1.7733 \\
28 &               1 467 393 &                                      3 293 075 932 912 & 2.0299 & log ratio \\
29 &               7 932 689 &                                      7 033 004 986 294 & 1.8621 \\
30 &               8 612 097 &                                     15 270 716 514 700 & 1.9010 \\
31 &              23 911 397 &                                     39 704 218 231 240 & 1.8430 \\
32 &              58 882 625 &                                    127 143 512 668 792 & 1.8152 \\
33 &              75 567 105 &                                  1 101 396 273 700 744 & 1.9093 & \\
34 &             293 056 017 &                                  2 999 880 103 468 384 & 1.8279 \\
35 &             299 480 577 &                                  7 168 907 674 555 372 & 1.8705 \\
36 &             344 371 457 &                                  8 469 967 658 156 470 & 1.8657 \\
37 &             677 585 217 &                                 85 520 198 863 910 422 & 1.9174 &  \\
38 &             788 620 517 &                                111 976 109 464 769 974 & 1.9163 \\
39 &           1 951 587 609 &                                729 002 383 530 173 326 & 1.9227 \\
40 &           2 672 464 025 &                             18 462 714 214 850 651 206 & 2.0438 & log ratio \\
41 &          15 958 182 629 &                            174 217 613 946 575 461 336 & 1.9838 \\
42 &          52 002 133 905 &                            708 842 423 362 106 898 604 & 1.9457\\
43 &         187 559 691 777 &                         28 019 930 157 696 441 632 608 & 1.9912 & \\
44 &         213 121 397 657 &                        261 160 802 435 320 822 179 964 & 2.0671 & log ratio \\
45 &       2 102 553 018 369 &                      2 326 308 864 144 659 904 362 302 & 1.9774 \\
46 &       4 092 376 481 793 &                      3 435 069 575 515 199 820 916 438 & 1.9454 \\
47 &       9 425 800 771 749 &                      8 466 499 925 136 151 018 385 248 & 1.9213 \\
48 &      10 995 522 036 993 &                     10 011 211 698 317 452 027 896 052 & 1.9170 \\
49 &      17 364 775 952 385 &                    121 443 575 752 945 981 388 885 320 & 1.9702 \\
50 &      37 673 395 808 865 &                    360 951 744 282 085 208 020 927 096 & 1.9562 \\
51 &      46 787 997 137 637 &                    906 895 196 063 261 171 526 071 584 & 1.9720 \\
52 &     149 782 712 025 089 &                  1 210 848 622 594 996 854 542 206 672 & 1.9106 \\
53 &     207 834 647 861 121 &                  1 243 146 285 315 831 843 449 444 584 & 1.8924 \\
54 &     245 192 561 861 633 &                 21 142 881 571 964 420 917 882 086 904 & 1.9685 \\
55 &     787 788 664 859 621 &                 41 274 920 521 729 277 074 671 487 084 & 1.9210 \\
56 &   1 158 406 214 795 493 &                 44 906 946 035 645 153 713 404 631 060 & 1.9021 \\
57 &   1 277 092 890 862 593 &                 90 432 155 160 573 081 675 193 686 196 & 1.9168 \\
58 &   1 890 156 036 203 417 &                154 710 878 312 175 791 134 940 888 500 & 1.9107 \\
59 &   2 618 767 040 913 665 &                928 077 024 906 035 572 854 112 157 278 & 1.9437 \\
60 &   4 884 898 276 413 393 &              2 366 708 823 221 238 093 371 274 372 556 & 1.9360 \\
61 &  11 397 570 803 793 921 &            145 412 571 545 812 686 532 103 947 205 632 & 2.0031 \\
62 &  22 375 364 831 208 729 &            197 795 804 242 092 643 270 085 071 859 668 & 1.9753 \\
63 &  90 370 152 179 192 729 &            830 272 236 784 374 542 016 248 397 039 502 & 1.9414 \\
64 & 156 427 084 727 752 601 &          1 620 193 332 937 735 208 990 463 312 696 940 & 1.9314 \\
65 & 340 848 634 085 148 225 &          4 398 752 114 240 261 198 345 796 424 029 598 & 1.9189 \\
66 & 460 533 332 579 759 073 &          4 579 947 937 528 522 259 447 176 209 253 684 & 1.9057 \\
67 & 826 013 836 782 232 577 &          6 403 193 270 344 035 112 633 081 355 482 246 & 1.8868 \\
68 & 929 160 718 932 705 509 &         19 469 464 240 235 887 857 456 644 898 658 276 & 1.9083 \\
\end{tabular}
}
\caption{Path records for $T'$.}
\label{f:pathrecords}
\end{figure}



\end{document}